\DeclareMathOperator{\lc}{H}
\newcommand{\lcm}[1]{\operatorname{H^{#1}_{\mathfrak m}}}
\newcommand{\Spec}{\operatorname{Spec}}
\newcommand{\length}{\ell}
\newcommand{\eh}{\operatorname{e}}
\newcommand{\mf}{\mathfrak}
\DeclareMathOperator{\depth}{depth}
\DeclareMathOperator{\m}{\mathfrak{m}}
\DeclareMathOperator{\Ann}{Ann}
\DeclareMathOperator{\Ass}{Ass}
\DeclareMathOperator{\rt}{rt}
\DeclareMathOperator{\reg}{reg}
\DeclareMathOperator{\rees}{\mathcal{R}}
\DeclareMathOperator{\AR}{ar}
\DeclareMathOperator{\NCM}{NCM}
\DeclareMathOperator{\Att}{Att}
\newcommand{\gr}{\operatorname{gr}}
\newtheorem{theorem}{Theorem}[section]
\newtheorem{lemma}[theorem]{Lemma}
\newtheorem{proposition}[theorem]{Proposition}
\newtheorem{corollary}[theorem]{Corollary}
\newtheorem*{statement*}{Statement}
\newtheorem*{theorem*}{Theorem}
\newtheorem*{lemma*}{Lemma}
\newtheorem*{fact*}{Fact}
\newtheorem*{mainthm*}{Main Theorem}
\theoremstyle{definition}
\newtheorem{definition}[theorem]{Definition}
\newtheorem*{definition*}{Definition}
\newtheorem*{example*}{Example}
\theoremstyle{remark}
\newtheorem{remark}[theorem]{Remark}
\newtheorem{question}[theorem]{Question}
\begin{document}

\title{Filter regular sequence under small perturbations}

\author{Linquan Ma}
\address{Department of Mathematics, Purdue University, West Lafayette, IN 47907 USA}
\email{ma326@purdue.edu}

\author{Pham Hung Quy}
\address{Department of Mathematics, FPT University, Hanoi, Vietnam}
\email{quyph@fe.edu.vn}

\author{Ilya Smirnov}
\address{Department of Mathematics, Stockholm University, S-106 91, Stockholm, Sweden}
\email{smirnov@math.su.se}

\maketitle

\begin{abstract}
We answer affirmatively a question of Srinivas--Trivedi \cite{SrinivasTrivedi}: in a Noetherian local ring $(R,\m)$, if $f_1,\dots,f_r$ is a filter-regular sequence  and $J$ is an ideal such that $(f_1, \ldots, f_r)+J$ is $\m$-primary, then there exists $N>0$ such that for any $\varepsilon_1,\dots,\varepsilon_r \in \m^N$, we have an equality of Hilbert functions: $H(J, R/(f_1,\dots,f_r))(n)=H(J, R/(f_1+\varepsilon_1,\dots, f_r+\varepsilon_r))(n)$ for all $n\geq 0$. We also prove that the dimension of the non Cohen--Macaulay locus does not increase under small perturbations, generalizing another result of \cite{SrinivasTrivedi}.
\end{abstract}

\section{Introduction}

Many fundamental questions in singularity theory arise from studying deformations. One particular way of deforming a singularity is by changing the defining equations by adding terms of high order. This problem often arises while working with analytic singularities.
The first instance is the problem of finite determinacy which asks whether for a singularity defined analytically, e.g., as a quotient of a (convergent) power series ring, can be transformed into an equivalent algebraic singularity by truncating the defining equations.
More generally, such truncation arise from Artin's approximation that gives a way to descend finite structures, such as modules and finite complexes,
over the completion of a ring to finite structures over its henselization, which is a direct limit of essentially finite extensions with many nice properties.

This problem was first considered by Samuel in 1956 \cite{Samuel}, who showed 
for hypersurfaces $f \in S = k[[x_1, \ldots, x_d]]$ with isolated singularities
that for large $N$ if $\varepsilon \in \m^N$ then there is an automorphism of $S$ that maps $f \mapsto f + \varepsilon$. Samuel's result was extended by Hironaka in 1965 \cite{Hironaka}, who showed that if $S/I$ is an equidimensional reduced isolated singularity, and the ideal $I'$ obtained by changing the generators of $I$ by elements of sufficiently larger order is such that $S/I'$ is still reduced, equidimensional, and same height as $I$, then there is an automorphism of $S$ that maps $I \mapsto I'$. Cutkosky and Srinivasan further extended Samuel and Hironaka's result, we refer to \cite{CutkoskySrinivasan93,CutkoskySrinivasan97} for more details. 

On the other hand, by results in \cite{CutkoskySrinivasan97,GreuelPham}, in order to get an isomorphism under small perturbation, it is essential to perturb by elements contained in the Jacobian ideal. So instead of requiring the deformation to give isomorphic rings, we consider a weaker question: what properties are preserved by a sufficiently fine perturbation? Since usually singularities are measured and studied via various numerical invariants, we are led to study the behavior of such invariants.

The first such study that we are aware of was performed by Eisenbud in \cite{Eisenbud} in relation with
the application of Artin's approximation by Peskine and Szpiro in \cite{PeskineSzpiro}. Eisenbud
showed how to control the homology of a complex under a perturbation and thus showed that Euler characteristic and depth can be preserved.

Perhaps the most natural direction is to study the behavior of Hilbert--Samuel function. Srinivas and Trivedi in \cite{SrinivasTrivedi} showed that the Hilbert--Samuel function of a sufficiently fine perturbation is at most the original Hilbert--Samuel function \cite[Lemma 3]{SrinivasTrivedi}. This can be viewed as saying that the singularity of a perturbation, measured by the Hilbert--Samuel function, is no worse than the original singularity. A natural question they asked is that whether the Hilbert--Samuel functions can be actually {\it preserved} under small perturbation, when the ideal is generated by a filter-regular sequence \cite[Question 1]{SrinivasTrivedi}.

The notion of a filter-regular sequence first appeared in the theory of generalized Cohen--Macaulay rings \cite{SNN}.
Regular sequences are filter-regular, and, roughly speaking, a filter-regular sequence behaves like a regular sequence on the punctured spectrum. Every system of parameters of a generalized Cohen--Macaulay ring is a filter-regular sequence, and the converse holds under mild conditions.

Our main result in this paper answers the above question of Srinivas and Trivedi in the affirmative.

\begin{mainthm*}[Theorem \ref{main}]
Let $(R, \m)$ be a Noetherian local ring. Suppose $f_1,\dots,f_r$ is a filter-regular sequence and $J$ is an ideal such that $(f_1, \ldots, f_r)+J$ is $\m$-primary. Then there exists $N>0$ such that for every $\varepsilon_1, \varepsilon_2, \ldots, \varepsilon_r \in \m^N$, the Hilbert--Samuel functions of
$R/(f_1, \ldots, f_r)$ and $R/(f_1+\varepsilon_1, \ldots, f_r+\varepsilon_r)$ with respect to $J$ are equal. In fact,
\[
\gr_{J} (R/(f_1, \ldots, f_r)) \cong \gr_{J} (R/(f_1 + \varepsilon_1, \ldots, f_r + \varepsilon_r)).
\]
\end{mainthm*}


This was proved in \cite[Theorem 3]{SrinivasTrivedi} under the additional hypothesis that $\lc_{\m}^i(R/(f_1, \ldots, f_r))$ has finite length for all $i<\dim(R/(f_1, \ldots, f_r))$. To the best of our knowledge, our main theorem was not known before even when $f_1,\dots,f_r$ is a regular sequence. Let us also mention that, as pointed out in \cite{SrinivasTrivedi}, requiring $I=(f_1,\dots,f_r)$ to be generated by a filter-regular sequence is the greatest generality at which the conclusion could hold.

We want to note that for finite determinacy, explicit bounds are known in terms of the Jacobian
ideal (see for example \cite{GreuelPham}). For complete intersections that do not necessarily have an isolated singularity,
Srinivas and Trivedi obtained explicit bounds in terms of multiplicity in \cite[Proposition~1]{SrinivasTrivediCI} (see also \cite{SrinivasTrivediJAG,TrivediHilbertfunctionCMregularityuniformAR,QuyTrung} for related results).

\begin{question}\label{explicit question}
Can one obtain explicit bounds on $N$ in the Main Theorem?
\end{question}

With this question in mind, we give explicit bounds on $N$ wherever possible. However,
we do not know how to get an explicit constant in the main theorem of \cite{Eisenbud} and an explicit bound for \cite[Remark 1.12]{HunekeTrivedi}, which are needed in the proof of Theorem~\ref{main}.

This paper is organized as follows: After gathering preliminaries in Section~2, we prove our main result, Theorem~\ref{main}, in Section~3.
In contrast with \cite{SrinivasTrivedi}, we use induction on the length of the filter-regular sequence, and
as a key step in the induction we establish a uniform bound on certain Artin--Rees numbers in Corollary~\ref{uniform AR}.
In Section~4, we extend another related result of Srinivas and Trivedi by showing that the dimension of the
non Cohen--Macaulay locus can be controlled under small perturbations.

\vspace{0.5em}

\noindent\textbf{Acknowledgement}: This work has been done during a visit of the second and third authors to Purdue University in June 2019. The first author is supported in part by NSF Grant DMS $\#1901672$ and by NSF Grant DMS $\#1836867/1600198$ when preparing this paper. The second author is supported by a fund of Vietnam National Foundation for Science and Technology Development (NAFOSTED) under grant number 101.04-2020.10. The third author's
stay at the Purdue University was supported by Stiftelsen G S Magnusons fond of Kungliga Vetenskapsakademien. The authors thank the referee for her/his suggestions that led to improvement of this paper.

\section{Preliminaries}
Throughout this paper, all rings are commutative, Noetherian, with multiplicative identity, and all $R$-modules are finitely generated. We begin by collecting the definition and some basic properties of filter-regular sequences that were introduced in \cite{SNN}.

\begin{definition}
A sequence of elements $f_1,\dots,f_r$ in $R$ is called an {\it improper regular sequence} if for every $0 \leq i < r$, $f_{i+1}$ is a nonzerodivisor on $R/(f_1,\dots,f_i)$. If, in addition, $(f_1,\dots,f_r)R\neq R$, then it is called a {\it regular sequence}.
\end{definition}

\begin{definition}
Let $(R, \m)$ be a local ring. An element $f$ is called {\it filter-regular} if $f \notin P$ for $P \in \Ass (R) \setminus \{\m\}$.
A sequence of elements $f_1, \ldots, f_r$ is filter-regular if for every $0 \leq i < r$ the image of
$f_{i + 1}$ in $R/(f_1, \ldots, f_{i})$ is a filter-regular element.
\end{definition}

It should be noted that $f$ being filter-regular on $(R,\m)$ means $\mathrm{Supp}(0:f) \subseteq V(\frak m)$, or equivalently, $0:f \subseteq 0: \frak m^n$ for $n \gg 0$.

\begin{lemma}\label{filter props}
Let $f_1,\dots,f_r$ be a filter-regular sequence in a local ring $(R,\m)$. We have
\begin{enumerate}
\item The Koszul homology modules $\lc_i (f_1, f_2, \ldots, f_r; R)$ has finite length for all $i\neq 0$.
\item The image of $f_1$ in $R/(f_2, \ldots, f_r)$ is a filter-regular element.
\end{enumerate}
\end{lemma}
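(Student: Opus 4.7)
For part (1), my plan is to proceed by localization at non-maximal primes. Given any $P \in \Spec(R) \setminus \{\m\}$, the filter-regular hypothesis says each annihilator $0 :_{R/(f_1,\ldots,f_i)} f_{i+1}$ is supported only at $\m$, hence vanishes after localizing at $P$. Thus $f_1,\ldots,f_r$ becomes an improper regular sequence in the local ring $R_P$, and a standard induction using the Koszul long exact sequence for one element shows that the Koszul homology of any improper regular sequence vanishes in positive degrees. This gives $\lc_i(f_1,\ldots,f_r;R)_P \cong \lc_i(f_1,\ldots,f_r;R_P) = 0$ for every $i > 0$ and every $P \neq \m$. Since each $\lc_i(f_1,\ldots,f_r;R)$ is finitely generated and supported only at $\m$, it must have finite length.

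For part (2), I plan to derive the claim directly from (1) via the Koszul long exact sequence arising from the factorization $K_\bullet(f_1,\ldots,f_r;R) = K_\bullet(f_1;R) \otimes_R K_\bullet(f_2,\ldots,f_r;R)$. Its low-degree portion reads
\[
\lc_1(f_1,\ldots,f_r;R) \longrightarrow R/(f_2,\ldots,f_r) \xrightarrow{\pm f_1} R/(f_2,\ldots,f_r),
\]
so $0 :_{R/(f_2,\ldots,f_r)} f_1$ is a homomorphic image of $\lc_1(f_1,\ldots,f_r;R)$. By part (1) the latter has finite length, hence so does the former, which therefore has support contained in $\{\m\}$. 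That is exactly what it means for $f_1$ to be a filter-regular element in $R/(f_2,\ldots,f_r)$.

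I do not anticipate a serious obstacle here. Both parts are short corollaries of the single observation that filter-regularity localizes to improper regularity at non-maximal primes, combined with standard computations in the Koszul complex. The only item requiring mild care is verifying that the vanishing of positive Koszul homology goes through for \emph{improper} regular sequences (allowing the generated ideal to extend to the unit ideal after localization), but this is handled by exactly the same inductive argument as in the proper case.
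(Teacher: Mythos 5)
Your proof is correct. For part (1) you have written out the standard argument (localize at $P\neq\m$, observe that filter-regularity becomes improper regularity, and use depth-sensitivity of the Koszul complex for improper regular sequences); the paper simply cites this as well known, so there is nothing to compare there. For part (2) you take a genuinely different route: the paper also localizes at each $P\neq\m$, but then argues pointwise that an improper regular sequence either contains a unit (making the claim trivial) or is an honest regular sequence in $R_P$, in which case permutability of regular sequences in a Noetherian local ring gives that $f_1$ is a nonzerodivisor on $R_P/(f_2,\dots,f_r)$. You instead deduce (2) globally from (1): the long exact sequence of Koszul homology identifies $0:_{R/(f_2,\dots,f_r)}f_1$ with a homomorphic image of $\lc_1(f_1,\dots,f_r;R)$, which has finite length by (1), and finite length of the annihilator is exactly filter-regularity. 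Your version buys a cleaner logical structure (part (2) becomes a corollary of part (1) and avoids invoking permutability), and in fact the same exact sequence is reused by the paper later in Lemma~\ref{control colon}; the paper's version buys independence of (2) from (1), needing only the elementary theory of regular sequences. Both arguments are complete and correct.
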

\begin{proof}
The first assertion is well-known, see \cite[Lemma 1.17]{SchenzelSixLectures}. To see the second assertion, we observe that $f_1,\dots,f_r$ is a filter-regular sequence if and only if the image of $f_1,\dots,f_r$ is an improper regular sequence on $R_P$ for every $P\neq\m$. Now it is enough to prove that if $f_1,\dots,f_r$ is an improper regular sequence on $R_P$, then $f_1$ is always a nonzerodivisor on $R_P/(f_2,\dots,f_r)$: if one of the $f_i$ is a unit on $R_P$ then this is clear, otherwise $f_1,\dots,f_r$ is a regular sequence in $R_P$ and thus $f_1$ is a nonzerodivisor on $R_P/(f_2,\dots,f_r)$.
\end{proof}

\begin{remark}
Clearly, a regular sequence is a filter-regular sequence. Moreover, a filter-regular sequence is an improper regular sequence on the punctured spectrum.
However, many properties of regular sequence do not apply to filter-regular sequence.
For example, the length of a filter-regular sequence is not generally bounded and, by an easy prime avoidance argument,
every $\m$-primary ideal can be generated by a filter-regular sequence. We also caution the reader that, unlike regular sequence, filter-regular sequence in a local ring does not permute in general: consider $R=k[[x,y,z]]/(xy, xz)$, then $x+y, z$ is a filter-regular sequence, but $z$ is not a filter-regular element.
\end{remark}

We next recall the (strong) Artin--Rees number. 

\begin{definition}
Let $R$ be a Noetherian ring, $J$ be an ideal, and $N \subset M$ be $R$-modules.
The Artin--Rees number, $\AR (J, N \subset M)$, is the least integer $s$ such that for all $n \geq s$
\[
J^nM \cap N = J^{n-s} (J^sM \cap N).
\]
If $I \subset R$ is an ideal, we simplify our notation and write $\AR_J (I) = \AR (J, I \subset R)$.
\end{definition}


We also recall that for an ideal $J\subseteq R$, the Rees algebra $\rees_J(R)$ is the subalgebra $R[Jt]$ of $R[t]$ and the associated graded ring $\gr_J (R)$ can be defined as the quotient $\rees_J (R)/J\rees_J (R)$. Both $\rees_J(R)$ and $\gr_J (R)$ are $\mathbb{N}$-graded rings generated by degree one forms over their degree zero subring.

\begin{definition}
Let $G$ be a Noetherian $\mathbb{N}$-graded ring generated over $G_0$ by forms of degree $1$.
The {\it relation type} of $G$, $\rt (G)$, is the maximum degree of a minimal generator of the ideal defining $G$ as
as a quotient of a polynomial ring over $G_0$. The {\it Castelnuovo--Mumford regularity} of $G$, $\reg(G)$, is defined to be $\max\{a_i(G)+i| i\geq 0\}$, where $a_i(G)=\max\{n|\lc_{G_+}^i(G)_n\neq 0\}$.\footnote{Here $\lc_{G_+}^i(G)$ denotes the $i$-th local cohomology module of $G$ supported at the irrelevant ideal $G_+=\oplus_{i>0}G_i$. It follows from the \v{C}ech complex characterization of local cohomology (via a homogeneous set of generators of $G_+$) that each $\lc_{G_+}^i(G)$ is $\mathbb{Z}$-graded.}
\end{definition}

Planas-Vilanova \cite{PlanasVilanova,PlanasVilanova2} pioneered an approach to uniform Artin--Rees property via relation type.
The next result follows from \cite[Theorem~2]{PlanasVilanova2}.

\begin{theorem}\label{thm relation type}
Let $I, J$ be ideals of $R$. Then $\AR_J (I) \leq \rt(\rees_J(R/I))$.
\end{theorem}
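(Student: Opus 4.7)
The plan is to convert the relation-type bound directly into ideal-theoretic statements about the intersections $J^n\cap I$. Fix generators $a_1,\dots,a_m$ of $J$, set $S=R[x_1,\dots,x_m]$, and consider the composite surjection
\[
\bar\phi\colon S \twoheadrightarrow \rees_J(R)\twoheadrightarrow \rees_J(R/I),\qquad x_i\mapsto a_it.
\]
Let $K=\ker\bar\phi$. Since $\bar\phi$ kills $I$, we have $IS\subseteq K$, and $K/IS$ is exactly the kernel of the induced presentation $(R/I)[x_1,\dots,x_m]\twoheadrightarrow \rees_J(R/I)$. Writing $s=\rt(\rees_J(R/I))$, the hypothesis therefore says that $K$ is generated as an ideal of $S$ by $IS$ together with finitely many homogeneous elements $h_1,\dots,h_t\in K$ with $\deg h_j\leq s$.

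The decisive observation is the following dictionary between $K$ and Artin--Rees data: a homogeneous form $F\in S_n$ evaluates to $F(a_1,\dots,a_m)\in J^n$, and $F\in K_n$ if and only if $F(a_1,\dots,a_m)\in I$. Equivalently, the substitution $x_i\mapsto a_i$ maps $K_n$ onto $J^n\cap I$.

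The inclusion $J^{n-s}(J^s\cap I)\subseteq J^n\cap I$ is immediate. Conversely, given $r\in J^n\cap I$ with $n\geq s$, we pick $F\in K_n$ with $F(a_1,\dots,a_m)=r$ and decompose it using the generators of $K$:
\[
F=\sum_{j} g_j h_j+E,\qquad g_j\in S_{n-\deg h_j},\ E\in (IS)_n.
\]
Substituting $x_i\mapsto a_i$, each factor $h_j(a)$ lies in $J^{\deg h_j}\cap I$ and $g_j(a)\in J^{n-\deg h_j}$, so
\[
g_j(a)h_j(a)\in J^{n-\deg h_j}(J^{\deg h_j}\cap I)\subseteq J^{n-s}(J^s\cap I),
\]
using $J^{s-d}(J^d\cap I)\subseteq J^s\cap I$ for $d\leq s$. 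Moreover $E(a)\in I\cdot J^n = J^{n-s}\cdot(IJ^s)\subseteq J^{n-s}(J^s\cap I)$. Summing, $r\in J^{n-s}(J^s\cap I)$, as required.

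I do not expect a substantial obstacle; the proof is essentially a translation between polynomial relations in the Rees presentation and Artin--Rees equalities. The only point requiring care is lifting a degree decomposition of $\bar F$ in $(R/I)[x_1,\dots,x_m]/\bar K$ back to $S$, and this is painless precisely because $IS\subseteq K$ guarantees that any lift of an element of $\bar K$ already lies in $K$ with the same degree.
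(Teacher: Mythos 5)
Your proof is correct. Note that the paper itself does not prove this statement at all: it is quoted as a black box from Planas-Vilanova's work (the paper says only that it ``follows from [Theorem 2, Planas-Vilanova2]''), so there is no internal argument to compare against. Your direct translation is essentially the standard proof underlying that citation: present $\rees_J(R/I)$ as a quotient of $S=R[x_1,\dots,x_m]$, observe that the kernel $K$ satisfies $K_n(a_1,\dots,a_m)=J^n\cap I$, and then push a degree-$\leq s$ generating set of $K$ modulo $IS$ through the substitution $x_i\mapsto a_i$. All the individual steps check out: the homogeneous lifting of generators of $K/IS$ is unproblematic because $IS\subseteq K$; the estimate $J^{n-d}(J^d\cap I)\subseteq J^{n-s}(J^s\cap I)$ for $d\leq s$ is right; and the term $E(a)\in IJ^n\subseteq J^{n-s}(J^s\cap I)$ is handled correctly. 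The one point worth making explicit is that the relation type is independent of the chosen generating set of $J$ (or at least that your presentation via $a_1,\dots,a_m$ computes it), which is standard but is silently used when you assert that $K/IS$ is generated in degrees $\leq \rt(\rees_J(R/I))$. What your approach buys is a self-contained, elementary proof with an explicit bound, which is in the spirit of the authors' stated goal (Question \ref{explicit question}) of making all constants effective.
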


We also recall \cite[Lemma 4.8]{Ooishi}, which was also rediscovered in \cite[Proposition 4.1]{JohnsonUlrich} and \cite[Corollary~3.3]{Trung}.

\begin{theorem}\label{thm regularity}
Let $J$ be an ideal of $R$. Then we have
$$\rt (\rees_J(R)) \leq \reg (\rees_J(R)) + 1 = \reg (\gr_{J} (R)) + 1. $$
\end{theorem}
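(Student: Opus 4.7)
The statement has two parts: the inequality $\rt(\rees_J(R)) \le \reg(\rees_J(R)) + 1$ and the equality $\reg(\rees_J(R)) = \reg(\gr_J(R))$. I would address these separately.

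For the inequality, the plan is to prove the more general fact that $\rt(G) \le \reg(G) + 1$ for any standard graded Noetherian ring $G$ generated in degree one over $G_0$. Present $G = P/I$ with $P = G_0[T_1, \ldots, T_n]$ a polynomial ring with $\deg T_i = 1$. By the definition in the excerpt, $\rt(G)$ is the maximum degree of a minimal generator of $I$, which coincides with $a_1 := \max_j a_{1j}$ where $F_1 = \bigoplus_j P(-a_{1j})$ appears in a minimal graded free resolution $\cdots \to F_1 \to F_0 = P \to G \to 0$. The standard identification of Castelnuovo--Mumford regularity via the degree shifts in a minimal free resolution (equivalent to the local-cohomology definition in the excerpt) then gives $\reg(G) \ge a_1 - 1$, so $\rt(G) \le \reg(G) + 1$.

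For the equality $\reg(\rees_J(R)) = \reg(\gr_J(R))$, write $A = \rees_J(R)$ and $A_+ = \bigoplus_{n \ge 1} A_n$. The crux is the short exact sequence of graded $A$-modules
\[
0 \to A_+(1) \to A \to \gr_J(R) \to 0,
\]
in which the first map sends the degree-$n$ piece $J^{n+1} t^{n+1}$ of $A_+(1)$ to $J^{n+1} t^n \subseteq A_n$. Despite appearing to ``divide by $t$'', this map is $A$-linear because the shift $(1)$ absorbs the missing factor of $t$; its image is $JA$, so the cokernel is $\gr_J(R) = A/JA$. I would then apply $\lc^\bullet_{A_+}(-)$, using $\lc^i_{A_+}(\gr_J(R)) = \lc^i_{\gr_+}(\gr_J(R))$ (the $A_+$-action factors through $\gr_+$) together with the tautological sequence $0 \to A_+ \to A \to R \to 0$ (with $R$ in degree $0$, hence $A_+$-torsion, yielding $\lc^i_{A_+}(A_+) \cong \lc^i_{A_+}(A)$ for $i \ge 2$), to compare $a_i(A)$ and $a_i(\gr_J(R))$ degree by degree. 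One direction: the top-nonvanishing piece of $\lc^i_{A_+}(A)$ injects into $\lc^i_{\gr_+}(\gr_J(R))$, giving $a_i(A) \le a_i(\gr_J(R))$; the reverse comparison is controlled by the connecting map into $\lc^{i+1}_{A_+}(A)(1)$, and reading off $\max_i(a_i + i)$ produces equal regularities.

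The main obstacle is this degree-chase, where the $(1)$-shift in the short exact sequence and the low-degree corrections from the tautological sequence must be reconciled with the $+i$ and $+(i+1)$ appearing in the definition of regularity; the low cohomological degrees $i = 0, 1$ require separate handling.
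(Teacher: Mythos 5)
The paper does not actually prove this statement: it is quoted from the literature (Ooishi's Lemma 4.8, rediscovered by Johnson--Ulrich and by Trung), so there is no internal proof to compare against. Your outline does follow the same strategy as those sources: the inequality $\rt \le \reg + 1$ is deduced from a resolution-theoretic control of the generating degrees of the defining ideal, and the equality $\reg(\rees_J(R)) = \reg(\gr_J(R))$ is extracted from the two short exact sequences $0 \to A_+ \to A \to R \to 0$ and $0 \to A_+(1) \to A \to \gr_J(R) \to 0$, both of which you set up correctly.

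That said, there are two genuine gaps. In Part 1, the step ``the standard identification of Castelnuovo--Mumford regularity via the degree shifts in a minimal free resolution $\ldots$ gives $\reg(G) \ge a_1 - 1$'' is where the entire content lies, and it is not standard in the generality needed here: $G_0$ is the Noetherian local ring $R$ (or a quotient of it), not a field. The direction of the comparison that comes for free from splicing a graded free resolution into short exact sequences is $\reg(G) \le \max_i\{a_{1j}-i\}$-type bounds, i.e., the wrong inequality; the direction you need, that the shifts of the \emph{minimal} resolution are bounded below by the local-cohomology regularity, is precisely the nontrivial assertion of Trung's theorem over an arbitrary Noetherian base. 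A correct route is to invoke a Mumford-type theorem (valid over Noetherian $G_0$) that a finitely generated graded module $M$ is generated in degrees at most $\reg(M)$, apply it to the defining ideal $I$, and combine with $\reg(I) \le \reg(G)+1$ obtained from $0 \to I \to P \to G \to 0$; as written, you are essentially assuming the statement you are trying to prove. In Part 2, the degree chase you describe is the actual substance of Ooishi's lemma --- the cases $i=0,1$, the shift $(1)$, and the reconciliation with the $+i$ in the definition of regularity are exactly where the work is --- and you explicitly leave it unexecuted, so the equality is set up but not established.
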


Combining the above two theorems shows that the Artin--Rees number of $I\subseteq R$ can be bounded using invariants of the associated graded ring of $R/I$. This fact will be crucial in the proof of our main result.

\section{The main result}

In this section we prove our main result that answers Srinivas--Trivedi's question \cite[Question 1]{SrinivasTrivedi}. We will need the following result on the preservation filter-regular sequence under small perturbation \cite[Remark 1.12]{HunekeTrivedi}.
\begin{lemma}\label{prevervation of filter regular}
Let $(R, \m)$ be a local ring, and $f_1,\dots,f_r$ a filter-regular sequence. Then there exists $N>0$ such that for every $\varepsilon_1, \varepsilon_2, \ldots, \varepsilon_r \in \m^N$, $f_1 + \varepsilon_1, \ldots, f_r + \varepsilon_r$ is a filter-regular sequence.
\end{lemma}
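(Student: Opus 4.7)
My plan is to induct on $r$, using the characterization that $g_1,\dots,g_r$ is filter-regular if and only if for every $1\leq i\leq r$ and every $P\in\Ass(R/(g_1,\dots,g_{i-1}))\setminus\{\m\}$ one has $g_i\notin P$. The base case $r=1$ will be a direct application of Krull's intersection theorem. The inductive step is the main obstacle, and I would attack it via the Koszul homology characterization from Lemma~\ref{filter props}(1) combined with an Eisenbud-type perturbation argument.

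\emph{Base case.} Write $\Ass(R)\setminus\{\m\}=\{P_1,\dots,P_s\}$, finite by Noetherianity. Since $f_1\notin P_j$, the image $\overline{f_1}\in R/P_j$ is nonzero; Krull's intersection theorem applied to the local ring $(R/P_j,\m/P_j)$ yields a largest integer $\nu_j\geq 0$ with $\overline{f_1}\in (\m/P_j)^{\nu_j}$, equivalently $f_1\notin\m^{\nu_j+1}+P_j$. Setting $N:=1+\max_j \nu_j$, for any $\varepsilon_1\in\m^N$ the relation $f_1+\varepsilon_1\in P_j$ would give $f_1\in P_j+\m^N$, contradicting the choice of $N$; hence $f_1+\varepsilon_1$ avoids each $P_j$ and is filter-regular.

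\emph{Inductive step.} A naive reduction to length $r-1$ via the quotient $R/(f_1+\varepsilon_1)$ faces two obstacles: (a) $f_2,\dots,f_r$ need not remain filter-regular on this quotient, since its associated primes depend on $\varepsilon_1$; and (b) even if it does, the bound produced by the inductive hypothesis would a priori depend on $\varepsilon_1$. To bypass both, I would instead use Lemma~\ref{filter props}(1): the sequence $\underline f=f_1,\dots,f_r$ is filter-regular iff $\lc_i(f_1,\dots,f_r;R)$ has finite length for all $i\geq 1$, equivalently iff some $\m^M$ kills all these Koszul homologies simultaneously. A perturbation $f_j\mapsto f_j+\varepsilon_j$ with $\varepsilon_j\in\m^N$ shifts the differentials of the Koszul complex by elements of $\m^N$; by an Eisenbud-type perturbation result for finite free complexes \cite{Eisenbud}, choosing $N$ large in terms of $M$ and the relevant Artin--Rees numbers of the modules $\lc_i(f_1,\dots,f_r;R)$ should force some $\m^{M'}$ to kill $\lc_i(f_1+\varepsilon_1,\dots,f_r+\varepsilon_r;R)$ for all $i\geq 1$, giving filter-regularity of the perturbed sequence.

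The hardest part is the uniform quantitative bound on $N$ in the inductive step: one must combine Krull intersection from the base case, uniform Artin--Rees estimates in the spirit of Theorems~\ref{thm relation type} and~\ref{thm regularity}, and the Eisenbud-type stability of finite-length Koszul homology. Extracting an explicit $N$ from this combination appears nontrivial, in line with Question~\ref{explicit question}.
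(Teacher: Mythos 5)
The paper does not actually prove this lemma---it is imported wholesale from \cite[Remark 1.12]{HunekeTrivedi}, and the authors note in the introduction that they do not even know how to make its bound explicit---so the comparison here is purely about the correctness of your argument. Your base case $r=1$ is correct: there are finitely many associated primes, and Krull intersection in each $R/P_j$ gives a uniform $N$.

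The inductive step (which is really a direct argument rather than an induction) has a genuine gap: it rests on the claim that $f_1,\dots,f_r$ is filter-regular \emph{if and only if} $\lc_i(f_1,\dots,f_r;R)$ has finite length for all $i\ge 1$. Lemma~\ref{filter props}(1) gives only the forward implication, and the converse is false. For example, in $R=k[[x,y]]/(x^2)$ the element $x$ lies in the associated prime $(x)\ne\m$, so $x,y$ is not a filter-regular sequence; yet $\lc_2(x,y;R)=0:(x,y)=0$ and $\lc_1(x,y;R)$ is supported only at $\m$ (localizing at $(x)$ makes $y$ a unit and kills all Koszul homology), hence has finite length. So even if the Eisenbud-type perturbation argument successfully bounds the length of the perturbed Koszul homology---and that part is plausible, being exactly how the paper proves Lemma~\ref{control colon}---you cannot conclude from it that the perturbed sequence is filter-regular. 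What is true, and would repair the strategy, is that $f_1,\dots,f_r$ is filter-regular iff for \emph{every} truncation $f_1,\dots,f_i$ ($1\le i\le r$) the modules $\lc_j(f_1,\dots,f_i;R)$, $j\ge 1$, all have finite length; one would then have to run the Eisenbud and Artin--Rees control on each truncated Koszul complex separately and take the maximum of the resulting bounds. As written, your proof does not establish the lemma.
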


We will also need the following fundamental result of Srinivas and Trivedi \cite[Lemma~3]{SrinivasTrivedi}. We present a proof here for the sake of completeness, and to emphasize that the bound is {\it explicit}.

\begin{lemma}\label{lemma ST surjection}
Let $(R, \m)$ be a local ring, $f_1,\dots,f_r \in R$, and $J\subseteq R$ be an ideal.
Let $k:= \AR_J  ((f_1, \ldots, f_r))$.
Then for every $\varepsilon_1, \varepsilon_2, \ldots, \varepsilon_r \in J^{k+1}$
we have a surjection
\[
\gr_{J} (R/(f_1, \ldots, f_r))  \twoheadrightarrow \gr_{J} (R/(f_1 + \varepsilon_1, \ldots, f_r + \varepsilon_r)).
\]
\end{lemma}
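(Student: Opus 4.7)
The plan is to view both of the relevant associated graded rings as quotients of $\gr_J(R)$ via the natural maps coming from $R \twoheadrightarrow R/I$ and $R \twoheadrightarrow R/I'$, where $I = (f_1, \ldots, f_r)$ and $I' = (f_1+\varepsilon_1, \ldots, f_r+\varepsilon_r)$. The desired surjection $\gr_J(R/I) \twoheadrightarrow \gr_J(R/I')$ will then follow if I can show the containment of kernels $\ker(\gr_J(R) \to \gr_J(R/I)) \subseteq \ker(\gr_J(R) \to \gr_J(R/I'))$. Unwinding, the degree-$n$ component of the first kernel is $(J^n \cap I + J^{n+1})/J^{n+1}$, and similarly with $I'$, so everything reduces to proving that for every $n \geq 0$,
\[
J^n \cap I \ \subseteq\ (J^n \cap I') + J^{n+1}.
\]

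The strategy for this containment is concrete: given $x \in J^n \cap I$, I will write $x = \sum_i c_i f_i$ with the $c_i$ enjoying as much $J$-adic control as possible, and then set $x' := \sum_i c_i (f_i + \varepsilon_i) \in I'$. By construction $x' - x = \sum_i c_i \varepsilon_i$, and as soon as this difference lies in $J^{n+1}$, I get $x' \in J^n$ as well, hence $x' \in J^n \cap I'$ and $x \equiv x' \pmod{J^{n+1}}$.

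To execute this, I split by the size of $n$ relative to $k = \AR_J(I)$. For $n \leq k$, I take any expression $x = \sum_i a_i f_i$ with $a_i \in R$; since $\varepsilon_i \in J^{k+1} \subseteq J^{n+1}$, the error term $\sum_i a_i \varepsilon_i$ lies in $J^{n+1}$ automatically. For $n > k$, I invoke the defining Artin--Rees identity $J^n \cap I = J^{n-k}(J^k \cap I)$ to write $x = \sum_j b_j g_j$ with $b_j \in J^{n-k}$ and $g_j \in J^k \cap I$; expanding each $g_j$ as an $R$-linear combination of the $f_i$ and collecting terms yields $x = \sum_i c_i f_i$ with $c_i \in J^{n-k}$, so $\sum_i c_i \varepsilon_i \in J^{n-k} \cdot J^{k+1} = J^{n+1}$, as needed.

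The only mild subtlety is the last rewriting in the case $n > k$: the Artin--Rees identity gives $x$ as a $J^{n-k}$-combination of elements in $J^k \cap I$, and one must pass from this to an expression $x = \sum_i c_i f_i$ with each coefficient itself in $J^{n-k}$. This is just a bookkeeping step, so I expect no serious obstacle; the rest of the proof is essentially a direct computation exploiting the hypothesis $\varepsilon_i \in J^{k+1}$.
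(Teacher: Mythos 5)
Your proposal is correct and follows essentially the same route as the paper: both reduce the surjection to the degree-$n$ containment $J^n\cap(f_1,\dots,f_r)\subseteq J^{n+1}+J^n\cap(f_1+\varepsilon_1,\dots,f_r+\varepsilon_r)$, handle small $n$ using $\varepsilon_i\in J^{k+1}$ directly, and for $n>k$ invoke the Artin--Rees identity $J^n\cap I=J^{n-k}(J^k\cap I)$ to gain the needed $J$-adic control on coefficients. The only difference is cosmetic: the paper carries out the large-$n$ step by ideal-theoretic manipulation with the modular law, whereas you perturb an explicit expression $x=\sum_i c_if_i$ with $c_i\in J^{n-k}$, and your bookkeeping step there is indeed harmless.
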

\begin{proof}
For any ideal $I\subseteq R$, the $n$-th graded piece of $\gr_J(R/I)$ can be identified with
\[
\frac{I + J^n}{I + J^{n+1}} \cong \frac{J^n}{J^{n+1} + J^n\cap I}.
\]
Now for $n\leq k+1$, it is clear that $(f_1, \ldots, f_r) + J^{n}=(f_1 + \varepsilon_1, \ldots, f_r + \varepsilon_r) + J^{n}$ by our choice of $\varepsilon_1,\dots, \varepsilon_r$. Therefore in order to define the desired surjection,
it is enough to show that
$J^{n+1} + (f_1, \ldots, f_r) \cap J^n \subseteq J^{n+1} + (f_1 + \varepsilon_1, \ldots, f_r + \varepsilon_r) \cap J^n$
for all $n \geq k+1$.

By our choice of $k$, we know that
$J^n \cap (f_1, \ldots, f_r) = J^{n-k} ((f_1, \ldots, f_r) \cap J^{k}).$
Therefore,
\[
J^{n+1} + (f_1, \ldots, f_r) \cap J^n = J^{n-k} (J^{k+1} + (f_1, \ldots, f_r)\cap J^k)
= J^{n-k} \left (J^k \cap ((f_1, \ldots, f_r) + J^{k+1})\right ).
\]
It follows that
\begin{align*}
J^{n+1} + (f_1, \ldots, f_r) \cap J^n &= J^{n-k} \left (J^k \cap ((f_1, \ldots, f_r) + J^{k+1})\right)
\\&=J^{n-k}\left(J^k \cap ((f_1+\varepsilon_1, \ldots, f_r+\varepsilon_r) + J^{k+1})\right)
\\&=J^{n-k}\left(J^{k+1} + J^k\cap (f_1+\varepsilon_1, \ldots, f_r+\varepsilon_r)\right)
\\&\subseteq J^{n+1} + J^n\cap (f_1 + \varepsilon_1, \ldots, f_r + \varepsilon_r). \qedhere
\end{align*}
\end{proof}

As observed in \cite[Remark after Corollary 2]{SrinivasTrivedi}, it easily follows from this lemma that the
stability of the Hilbert--Samuel function under perturbation is equivalent to an isomorphism of the associated graded rings
\[\gr_{J} (R/(f_1, \ldots, f_r))  \cong \gr_{J} (R/(f_1 + \varepsilon_1, \ldots, f_r + \varepsilon_r)).\]

We may now give an answer for Question \ref{explicit question}  in the case of one element, with an explicit bound on $N$.

\begin{theorem}\label{case one element}
Let $(R, \frak m)$ be a local ring. Let $f$ be a filter-regular element and $h$ a positive integer such that $\frak m^h (0:f) = 0$. Let $J$ be an ideal such that $(f) + J$ is $\frak m$-primary, and $t$ be a positive integer such that $\frak m^t \subseteq (f) + J$. Let $k = \mathrm{ar}_J(f)$ and $N = \max \{t(k+1), h\}$. Then for every $\varepsilon \in \frak m^N$ we have
\[
\gr_{J} (R/(f)) \cong \gr_{J} (R/(f + \varepsilon)).
\]
\end{theorem}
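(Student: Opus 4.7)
The plan is to combine the surjection from Lemma~\ref{lemma ST surjection} with a length computation that forces this surjection to be an isomorphism. To put ourselves in the setting of Lemma~\ref{lemma ST surjection}, I would exploit the hypothesis $\m^t \subseteq (f) + J$ to get
\[
\m^N \subseteq \m^{t(k+1)} \subseteq ((f)+J)^{k+1} = J^{k+1} + f\cdot((f)+J)^k \subseteq J^{k+1} + f\m
\]
(valid once $k \geq 1$, which we may assume by enlarging $k$ if necessary). Thus any $\varepsilon \in \m^N$ decomposes as $\varepsilon = cf + j$ with $c \in \m$ and $j \in J^{k+1}$, so $f + \varepsilon = (1+c)f + j$ with $1+c$ a unit; hence $(f+\varepsilon) = (f + (1+c)^{-1}j)$ as ideals, a perturbation by $(1+c)^{-1}j \in J^{k+1}$. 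Lemma~\ref{lemma ST surjection} then produces the graded surjection $\gr_J(R/(f)) \twoheadrightarrow \gr_J(R/(f+\varepsilon))$ and, built into its proof, the identity $(f) + J^n = (f+\varepsilon) + J^n$ for $n \leq k+1$ (which in particular forces $(f+\varepsilon)+J$ to be $\m$-primary, so each graded piece on the right has finite length).

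Next, I would translate equality of Hilbert--Samuel functions into a statement about colon ideals. For any $g \in R$, multiplication by $g$ induces an isomorphism $R/(J^n:g) \cong (g)/((g)\cap J^n)$, so the exact sequence $0 \to (g)/((g)\cap J^n) \to R/J^n \to R/((g)+J^n) \to 0$ yields
\[
\length(R/((g)+J^n)) = \length(R/J^n) - \length(R/(J^n:g)).
\]
Applying this to $g = f$ and $g = f + \varepsilon$, the desired equality of Hilbert--Samuel functions is equivalent to $\length(R/(J^n:f)) = \length(R/(J^n:(f+\varepsilon)))$ for every $n \geq 0$.

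The key step is the colon inclusion $(J^n:f) \subseteq (J^n:(f+\varepsilon))$ for $n \geq k$. The filter-regular Artin--Rees identity $J^n \cap (f) = J^{n-k}(J^k \cap (f))$ translates, via $(f)\cap J^n = f(J^n:f)$ and $\ker(\cdot f) = (0:f)$, into $(J^n:f) = J^{n-k}(J^k:f) + (0:f)$. Using the decomposition $\varepsilon = cf + j$, I compute
\[
cf \cdot J^{n-k}(J^k:f) = cJ^{n-k}\cdot f(J^k:f) \subseteq cJ^n \subseteq J^n, \qquad j\cdot J^{n-k}(J^k:f) \subseteq J^{k+1}J^{n-k} \subseteq J^n,
\]
and $\varepsilon \cdot (0:f) = 0$ because $\varepsilon \in \m^N \subseteq \m^h$ annihilates $(0:f)$. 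Hence $\varepsilon \cdot (J^n:f) \subseteq J^n$, so any $r \in J^n:f$ satisfies $r(f+\varepsilon) = rf + r\varepsilon \in J^n$, giving $r \in J^n:(f+\varepsilon)$.

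Finally, the colon inclusion yields $\length(R/((f+\varepsilon)+J^n)) \geq \length(R/((f)+J^n))$ for $n \geq k$, while summing the degreewise surjections from Lemma~\ref{lemma ST surjection} gives the reverse inequality; combined with the direct equality $(f) + J^n = (f+\varepsilon)+J^n$ for $n \leq k+1$, the Hilbert--Samuel functions agree for every $n$. Consequently, the degreewise-surjective map between graded pieces of equal finite length is an isomorphism, yielding $\gr_J(R/(f)) \cong \gr_J(R/(f+\varepsilon))$. The most delicate step is the colon inclusion, where both quantitative hypotheses $N \geq t(k+1)$ (to decompose $\varepsilon$ against $f$ and $J^{k+1}$) and $N \geq h$ (to annihilate $(0:f)$) are used simultaneously.
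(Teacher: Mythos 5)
Your proof is correct and follows essentially the same route as the paper's: Lemma~\ref{lemma ST surjection} supplies the surjection in one direction, and the other direction comes from the colon inclusion $(J^n:f)\subseteq (J^n:(f+\varepsilon))$, derived exactly as in the paper from the Artin--Rees identity $(J^n:f)=J^{n-k}(J^k:f)+(0:f)$ together with $\m^h(0:f)=0$. The only (cosmetic) difference is that the paper replaces $J$ by $(f)+J$ at the outset so that $\varepsilon\in J^{k+1}$ holds directly, whereas you keep the original $J$ and extract the unit $1+c$ from the decomposition $\varepsilon=cf+j$; both devices serve the same purpose of placing the perturbation in $J^{k+1}$ up to a unit.
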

\begin{proof}
First we note that by the definition of $N$ we have $(f) + J = (f+ \varepsilon) + J $ for every $\varepsilon \in \frak m^N$. We can replace $J$ by $(f)+J$ without affecting the corresponding associated graded rings and $\mathrm{ar}_J(f)$ will not increase. Therefore, we will assume that $J$ is a fixed $\m$-primary ideal of $R$ that contains $f$. Second, by \cite[Remark after Corollary 2]{SrinivasTrivedi}, it is enough to prove that $\length (R/(f, J^n)) = \length (R/(f + \varepsilon, J^n)$ for $n\gg0$.
By Lemma~\ref{lemma ST surjection}, it is enough to show that for any $\varepsilon \in \m^N$, we have an inequality $\length (R/(f, J^n)) \leq \length (R/(f + \varepsilon, J^n))$ for $n\gg0$.

Let $n \geq k$. By definition, $J^n \cap (f) = J^{n-k}(J^k \cap (f))$, so
$f (J^n : f) = fJ^{n-k}(J^k : f).$
Therefore $(J^n : f) \subseteq J^{n-k}(J^k:f) + (0:f)$. Since the opposite inclusion is clear, we have
$$(J^n : f) = J^{n-k}(J^k:f) + (0:f) \subseteq J^{n-k} + (0:f)$$ for all $n \ge k$. Hence, because $\varepsilon \in \m^N$, $\varepsilon\in J^k$ and $\varepsilon\cdot (0:f)=0$, we further obtain that $$(J^n : f) \subseteq J^{n - k} + (0 : f) \subseteq (J^n : \varepsilon).$$
It follows  that $(J^n : f)  = (J^n : (f, \varepsilon))\subseteq (J^n : (f + \varepsilon))$ for all $n \ge k$. Therefore
$$\length ((J^n:f)/J^n) \le \length ((J^n:(f+\varepsilon))/J^n).$$
On the other hand,
the exact sequence
\[
0 \to \frac{(J^n:f)}{J^n} \to R/J^n \xrightarrow{f} R/J^n \to R/(f,J^n) \to 0
\]
shows that $\length (R/(f, J^n)) = \length ((J^n:f)/J^n)$. Similarly, $\length (R/(f+\varepsilon, J^n)) = \length ((J^n:(f+\varepsilon))/J^n)$. Therefore
$$ \length (R/(f, J^n)) \le \length (R/(f+\varepsilon, J^n)) $$
for all $n \ge k$ and for all $\varepsilon \in \frak m^N$. The proof is complete.
\end{proof}

\begin{lemma} \label{control colon}
Let $(R, \m)$ be a local ring, and $f_1,\dots,f_r$ a filter-regular sequence. Let $h = \length(\lc_1(f_1, \ldots, f_r; R))$. Then there exists a positive integer $N$ such that for every $\varepsilon_1, \ldots, \varepsilon_r \in \m^N$, we have
$$\frak m^h \big( (f_1 + \varepsilon_1, \ldots, \widehat{f_i + \varepsilon_i}, \ldots, f_r  + \varepsilon_r) : (f_i + \varepsilon_i)\big) \subseteq (f_1 + \varepsilon_1, \ldots, \widehat{f_i + \varepsilon_i}, \ldots, f_r  + \varepsilon_r)$$
for all $1 \le i \le r$.
\end{lemma}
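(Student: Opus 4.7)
The plan is to reduce the claim to a length bound on the first Koszul homology of the perturbed sequence, and then invoke Eisenbud's stability theorem \cite{Eisenbud} to transfer that bound from the original sequence. The key general observation is that for any sequence $g_1, \dots, g_r$ in $R$ and any index $i$, the Koszul long exact sequence obtained by splitting $g_i$ off from the remaining elements (using that Koszul homology is invariant under permutation of the inputs) produces a surjection
$$\lc_1(g_1, \dots, g_r; R) \twoheadrightarrow \frac{(g_1, \dots, \widehat{g_i}, \dots, g_r) : g_i}{(g_1, \dots, \widehat{g_i}, \dots, g_r)}.$$
Thus, if we can find $N$ so that $\m^h$ annihilates $\lc_1(f_1+\varepsilon_1, \dots, f_r+\varepsilon_r; R)$ for every $\varepsilon_i \in \m^N$, then $\m^h$ annihilates the displayed quotient for each $i$, which is exactly the desired containment.

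For the annihilation step I would use the elementary Loewy-length inequality: any finitely generated module $M$ over a local ring satisfies $\m^{\length(M)} M = 0$, since the chain $M \supseteq \m M \supseteq \m^2 M \supseteq \cdots$ is strict at each nonzero step by Nakayama. Consequently it suffices to show that $\length(\lc_1(f_1+\varepsilon_1,\dots,f_r+\varepsilon_r;R)) \leq h$ when the $\varepsilon_i$'s lie in a sufficiently high power of $\m$. This is exactly what Eisenbud's theorem from \cite{Eisenbud} delivers: applied to the Koszul complex $K_\bullet(f_1,\dots,f_r;R)$, which is a bounded complex of free $R$-modules whose positive-degree homology modules all have finite length by Lemma \ref{filter props}(1), it yields an integer $N$ such that for every $\varepsilon_1,\dots,\varepsilon_r \in \m^N$ the perturbed Koszul complex $K_\bullet(f_1+\varepsilon_1,\dots,f_r+\varepsilon_r;R)$ has positive-degree homology isomorphic to that of the original. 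In particular $\lc_1$ of the perturbed sequence still has length $h$, and the argument is complete.

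The main obstacle is that Eisenbud's theorem is not quantitative in $N$, so the proof cannot give an explicit bound; this matches the authors' remark in the introduction that they are unable to produce an effective constant for Theorem \ref{main} precisely because of this ingredient. Everything else is formal: the surjection from $\lc_1$ onto the colon quotient is a standard consequence of the Koszul long exact sequence, and the passage from length to annihilation is the Loewy inequality.
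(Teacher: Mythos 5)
Your proof is correct and follows essentially the same route as the paper: a Koszul long exact sequence realizes each colon quotient as a homomorphic image of $\lc_1(f_1+\varepsilon_1,\dots,f_r+\varepsilon_r;R)$, Eisenbud's theorem bounds the length of that module by $h$, and the Loewy-length inequality (left implicit in the paper) converts the length bound into annihilation by $\m^h$. One small inaccuracy: Eisenbud's theorem does not give that the perturbed homology is \emph{isomorphic} to the original, only that its associated graded module (for a suitable filtration) is a subquotient of that of $\lc_1(f_1,\dots,f_r;R)$, hence $\length(\lc_1(f_1+\varepsilon_1,\dots,f_r+\varepsilon_r;R))\le h$ rather than $=h$ --- but this weaker inequality is all your argument uses, so the proof stands.
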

\begin{proof} Note that $h$ is well-defined by Lemma \ref{filter props}. By the main theorem of \cite{Eisenbud}
there exists $N$ such that for all $\varepsilon_1, \ldots, \varepsilon_r \in \m^{N}$,
$\gr (\lc_1 (f_1 +\varepsilon_1, \ldots, f_r + \varepsilon_r;R))$
is a subquotient of $\gr (\lc_1 (f_1, \ldots, f_r;R))$, where $\gr$ denotes the associated graded piece of certain filtration on the Koszul homology (see \cite{Eisenbud} for details). Therefore $\length(\lc_1(f_1 +\varepsilon_1, \ldots, f_r + \varepsilon_r;R)) \le h$ for all $\varepsilon_1, \ldots, \varepsilon_r \in \m^N$. On the other hand, for every $1\le i \le r$ we have exact sequence of Koszul homology
\begin{eqnarray*}
\lc_1 (f_1 +\varepsilon_1, \ldots, f_r + \varepsilon_r; R) &\longrightarrow& \lc_0(f_1 +\varepsilon_1, \ldots, \widehat{f_i + \varepsilon_i}, \ldots, f_r + \varepsilon_r; R)\\
&\overset{f_i + \varepsilon_i}{\longrightarrow}& \lc_0(f_1 +\varepsilon_1, \ldots, \widehat{f_i + \varepsilon_i} ,\ldots, f_r + \varepsilon_r; R).
\end{eqnarray*}
Therefore
$$
\frac{(f_1 + \varepsilon_1, \ldots, \widehat{f_i + \varepsilon_i}, \ldots, f_r  + \varepsilon_r) : (f_i + \varepsilon_i)}{(f_1 + \varepsilon_1, \ldots, \widehat{f_i + \varepsilon_i}, \ldots, f_r  + \varepsilon_r)}
$$
is a homomorphic image of $\lc_1 (f_1 + \varepsilon_1, \ldots, f_r + \varepsilon_r; R)$. Hence
\[
\length \left(\frac{(f_1 + \varepsilon_1, \ldots, \widehat{f_i + \varepsilon_i}, \ldots, f_r  + \varepsilon_r) : (f_i + \varepsilon_i)}{(f_1 + \varepsilon_1, \ldots, \widehat{f_i + \varepsilon_i}, \ldots, f_r  + \varepsilon_r)}\right) \le h.
\]
So the claim follows.
\end{proof}

Now we state and prove our main theorem.

\begin{theorem}\label{main}
Let $(R, \m)$ be a local ring. Suppose $f_1,\dots,f_r$ is a filter-regular sequence and $J$ is an ideal such that $(f_1, \ldots, f_r)+J$ is $\m$-primary. Then there exists $N>0$ such that for every $\varepsilon_1, \ldots, \varepsilon_r \in \m^N$, we have
\[
\gr_{J} (R/(f_1, \ldots, f_r)) \cong \gr_{J} (R/(f_1 + \varepsilon_1, \ldots, f_r + \varepsilon_r)).
\]
\end{theorem}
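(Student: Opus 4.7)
The plan is to prove the theorem by induction on the length $r$ of the filter-regular sequence. The base case $r=1$ is Theorem~\ref{case one element}. For the inductive step, I exploit that $f_2,\dots,f_r$ is a filter-regular sequence of length $r-1$ in $R/(f_1)$, for which $(f_2,\dots,f_r)+J$ remains primary to the maximal ideal. Applying the inductive hypothesis there produces a threshold $N_1$ such that for all lifts $\varepsilon_2,\dots,\varepsilon_r \in \m^{N_1}$,
\[
\gr_J\!\left(R/(f_1,f_2,\dots,f_r)\right) \;\cong\; \gr_J\!\left(R/(f_1,f_2+\varepsilon_2,\dots,f_r+\varepsilon_r)\right).
\]
This reduces the problem to perturbing only the first element: I must find $N_2$, chosen uniformly in the $\varepsilon_i$ for $i\geq 2$, such that for $\varepsilon_1 \in \m^{N_2}$ the analogous isomorphism holds after also replacing $f_1$ by $f_1+\varepsilon_1$.

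To handle the perturbation of $f_1$, I pass to the quotient ring $S_{\underline\varepsilon} := R/(f_2+\varepsilon_2,\dots,f_r+\varepsilon_r)$. By Lemma~\ref{prevervation of filter regular} (applied with $\varepsilon_1=0$) combined with Lemma~\ref{filter props}(2), the image of $f_1$ is a filter-regular element of $S_{\underline\varepsilon}$, so Theorem~\ref{case one element} applies inside $S_{\underline\varepsilon}$. Its explicit threshold depends on three quantities: an integer $h$ with $\m^h(0:_{S_{\underline\varepsilon}} f_1)=0$, an integer $t$ with $\m^t \subseteq (f_1)+J$ in $S_{\underline\varepsilon}$, and the Artin--Rees number $k = \AR_J(f_1)$ computed in $S_{\underline\varepsilon}$. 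I need each of these to be bounded uniformly as $\underline\varepsilon$ ranges over $(\m^{N_1})^{r-1}$.

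The bound on $h$ is exactly the content of Lemma~\ref{control colon}, which, after possibly enlarging $N_1$, supplies a single constant controlling $(0:_{S_{\underline\varepsilon}} f_1)$; the bound on $t$ is immediate from Nakayama once $N_1$ is chosen past any fixed $t$ with $\m^t \subseteq (f_1,\dots,f_r)+J$. The main obstacle is the uniform bound on $k$, which I expect to establish as Corollary~\ref{uniform AR}. My plan for this is to combine Theorems~\ref{thm relation type} and~\ref{thm regularity} to obtain
\[
\AR_J(f_1) \;\leq\; \rt\!\left(\rees_J(S_{\underline\varepsilon}/(f_1))\right) \;\leq\; \reg\!\left(\gr_J(R/(f_1,f_2+\varepsilon_2,\dots,f_r+\varepsilon_r))\right) + 1,
\]
and then invoke the inductive isomorphism displayed above to replace the right-hand side by $\reg(\gr_J(R/(f_1,\dots,f_r)))+1$, a finite constant independent of $\underline\varepsilon$. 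With all three bounds uniform in $\underline\varepsilon$, Theorem~\ref{case one element} in $S_{\underline\varepsilon}$ supplies the desired $N_2$, and setting $N=\max(N_1,N_2)$ completes the induction.
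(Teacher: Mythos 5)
Your proposal is correct and follows essentially the same route as the paper: induction on $r$, perturbing $f_2,\dots,f_r$ first via the inductive hypothesis, then perturbing $f_1$ via Theorem~\ref{case one element} applied in $R/(f_2+\varepsilon_2,\dots,f_r+\varepsilon_r)$, with the uniform bound on the Artin--Rees number obtained exactly as you describe from Theorems~\ref{thm relation type} and~\ref{thm regularity} together with the inductive isomorphism of associated graded rings (the paper carries this out inline rather than as a separate corollary; Corollary~\ref{uniform AR} is stated afterwards as a consequence). The uniform bounds on $h$ via Lemma~\ref{control colon} and on $t$ are likewise handled identically.
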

\begin{proof}
Note that we can choose $N$ such that $(f_1, \ldots, f_r)+J = (f_1 + \varepsilon_1, \ldots, f_r+ \varepsilon_r)+J$ for all $\varepsilon_1, \ldots, \varepsilon_r \in \frak m^N$. Therefore we can replace $J$ by $(f_1, \ldots, f_r)+J$ without affecting the corresponding associated graded rings. Hence, we assume that $J$ is a fixed $\m$-primary ideal of $R$ that contains $f_1, \ldots, f_r$.

We use induction on $r$. The case $r=1$ was proved in Theorem \ref{case one element}  and we now assume $r \geq 2$. Applying the induction hypothesis to $R/(f_1)$, there exists $K$ such that
$
\gr_{J} (R/(f_1, \ldots, f_r)) \cong \gr_{J} (R/(f_1, f_2 + \varepsilon_2, \ldots, f_r + \varepsilon_r))
$
for all $\varepsilon_2,\dots,\varepsilon_r\in \m^{K}$. We next enlarge $K$, if necessary, to assume that $K$ also satisfies both Lemma \ref{prevervation of filter regular} and Lemma \ref{control colon}.

We set $h = \length(\lc_1(f_1, \ldots, f_r; R))$ and
\begin{eqnarray*}
k = \reg\left(\gr_J(R/(f_1,\dots,f_r))\right)+ 2 & = & \reg\big(\gr_{J} (R/(f_1, f_2 + \varepsilon_2, \ldots, f_r + \varepsilon_r))\big)+ 2\\
&\geq& \AR_J \big( (f_1)(R/{(f_2 + \varepsilon_2, \ldots, f_r + \varepsilon_r)})\big)+1.
\end{eqnarray*}
by Theorem \ref{thm relation type} and Theorem \ref{thm regularity}. Applying Lemma \ref{control colon} with $\varepsilon_1 = 0$, we have
\[\m^{h}\frac{(f_2 + \varepsilon_2, \ldots, f_r + \varepsilon_r):f_1}{(f_2 + \varepsilon_2, \ldots, f_r + \varepsilon_r)}=0.\]
 By Lemma~\ref{filter props}, $f_1$ is a filter-regular element on $R/(f_2 + \varepsilon_2, \ldots, f_r + \varepsilon_r)$.
If $\m^t \subseteq J$, by applying Theorem \ref{case one element} to $R/(f_2 + \varepsilon_2, \ldots, f_r + \varepsilon_r)$, we know that
\[
\gr_{J} (R/(f_1, f_2 + \varepsilon_2, \ldots, f_r + \varepsilon_r)) \cong
\gr_{J} (R/(f_1 + \varepsilon_1, f_2 + \varepsilon_2, \ldots, f_r + \varepsilon_r))
\]
provided that $\varepsilon_1 \in \m^{\max \{tk, h\}}$.
Therefore for all $\varepsilon_1,\dots,\varepsilon_r\in \m^{\max \{K, tk, h\}}$
\[
\gr_{J} (R/(f_1, \ldots, f_r)) \cong \gr_{J} (R/(f_1, f_2 + \varepsilon_2, \ldots, f_r + \varepsilon_r))
\cong
\gr_{J} (R/(f_1 + \varepsilon_1, f_2 + \varepsilon_2, \ldots, f_r + \varepsilon_r)).
\]
The proof is complete.
\end{proof}

As an application we obtain an upper bound for Artin--Rees number under small perturbations. We suspect that the Artin--Rees number is preserved under small perturbations but we do not have a proof at this time.

\begin{corollary}\label{uniform AR}
Let $(R, \m)$ be a local ring. Suppose $f_1,\dots,f_r$ is a filter-regular sequence and $J$ is an ideal such that $(f_1, \ldots, f_r)+J$ is $\m$-primary.  Then there exist $N>0$ such that for every $\varepsilon_1, \varepsilon_2, \ldots, \varepsilon_r \in \m^N$, we have \[\AR_J  ((f_1+\varepsilon_1, f_2 + \varepsilon_2, \ldots, f_r + \varepsilon_r)) \leq \reg\left(\gr_J(R/(f_1,\dots,f_r))\right)+1.\]
\end{corollary}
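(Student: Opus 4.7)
The plan is to deduce this corollary directly from Theorem \ref{main} combined with the chain of inequalities provided by Theorem \ref{thm relation type} and Theorem \ref{thm regularity}. The point is that once we know the associated graded rings of $R/(f_1,\dots,f_r)$ and of the perturbed quotient $R/(f_1+\varepsilon_1,\dots,f_r+\varepsilon_r)$ are isomorphic as $\mathbb{N}$-graded rings, their Castelnuovo--Mumford regularities agree, and so does the resulting bound on the Artin--Rees number.

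First, I would apply Theorem \ref{main} to obtain an integer $N>0$ such that for every $\varepsilon_1,\dots,\varepsilon_r\in\m^N$ we have an isomorphism
\[
\gr_J(R/(f_1,\dots,f_r))\;\cong\;\gr_J(R/(f_1+\varepsilon_1,\dots,f_r+\varepsilon_r))
\]
of graded rings. In particular, the regularities coincide:
\[
\reg\bigl(\gr_J(R/(f_1+\varepsilon_1,\dots,f_r+\varepsilon_r))\bigr)=\reg\bigl(\gr_J(R/(f_1,\dots,f_r))\bigr).
\]

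Next I would combine the two general bounds already recorded in the preliminaries. Writing $I_\varepsilon:=(f_1+\varepsilon_1,\dots,f_r+\varepsilon_r)$, Theorem \ref{thm relation type} gives $\AR_J(I_\varepsilon)\leq \rt(\rees_J(R/I_\varepsilon))$, while Theorem \ref{thm regularity} gives $\rt(\rees_J(R/I_\varepsilon))\leq \reg(\gr_J(R/I_\varepsilon))+1$. Stringing these together with the regularity identity from the previous step yields
\[
\AR_J(I_\varepsilon)\;\leq\;\reg\bigl(\gr_J(R/I_\varepsilon)\bigr)+1\;=\;\reg\bigl(\gr_J(R/(f_1,\dots,f_r))\bigr)+1,
\]
which is precisely the claimed bound.

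There is essentially no obstacle here: all the hard work was done in proving Theorem \ref{main}, and the present statement is a formal consequence once that isomorphism of associated graded rings is in hand. The only small point to verify is that the same $N$ produced by Theorem \ref{main} works uniformly for every choice of $\varepsilon_1,\dots,\varepsilon_r\in\m^N$, but this is built into the statement of that theorem. Hence the proof reduces to citing Theorem \ref{main}, Theorem \ref{thm relation type}, and Theorem \ref{thm regularity} in sequence.
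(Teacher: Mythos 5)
Your proposal is correct and follows exactly the same route as the paper: invoke Theorem \ref{main} to get the isomorphism of associated graded rings, note that the regularities therefore coincide, and then chain Theorem \ref{thm relation type} with Theorem \ref{thm regularity}. Nothing is missing.
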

\begin{proof}
By Theorem \ref{main}, there exist $N>0$ such that for every $\varepsilon_1, \varepsilon_2, \ldots, \varepsilon_r \in \m^N$, $$\gr_J(R/(f_1,\dots,f_r))\cong \gr_J(R/(f_1+\varepsilon_1, f_2 + \varepsilon_2, \ldots, f_r + \varepsilon_r)).$$ In particular they have the same regularity and the result follows from Theorem~\ref{thm relation type} and Theorem \ref{thm regularity}.
\end{proof}

\section{Non Cohen--Macaulay locus}

In this section we prove that the dimension of the non Cohen--Macaulay locus does not increase under small perturbations. This partially extends \cite[Lemma 8]{SrinivasTrivedi}, which implies that the property of being Cohen--Macaulay on the punctured spectrum is preserved.
We begin by recalling some definitions and lemmas.

\begin{definition}
Let $(R, \mf m)$ be a local ring and $M$ be a finitely generated $R$-module of dimension $d$.
If $N$ is an $R$-module, then its annihilator $\Ann N = \{x \in R \mid xN = 0\}$ is an ideal of $R$.
Then we may define the ideal
$$\mathfrak{a}(M) = \Ann \lc_{\m}^{0}(M) \cdot \Ann \lc_{\m}^1(M) \cdots \Ann \lc_{\m}^{d-1}(M).$$
\end{definition}

We recall the following well-known Faltings' annihilator theorem, see \cite[9.6.6]{BrodmannSharp}.
\begin{lemma}\label{NCM locus}
Let $(R, \mf m)$ be a local ring that is a homomorphic image of a Cohen--Macaulay ring.
Then
$$V(\mathfrak{a}(R)) = \{P \in \mathrm{Spec}(R) \mid \mathrm{depth}R_P + \dim R/P < \dim R\}.$$
In particular, if we let $\NCM(R)$ to denote the non Cohen--Macaulay locus of $R$,
i.e., the subset of $\Spec R$ such that $R_P$ is not Cohen--Macaulay,
then $\NCM (R) \subseteq V(\mathfrak{a}(R)) $, and the equality holds iff $R$ is equidimensional.
\end{lemma}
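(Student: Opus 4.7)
The plan is to obtain Faltings' annihilator theorem in this precise form via local duality, after reducing to the complete case. First I would replace $R$ by its completion $\widehat R$: it is still a homomorphic image of a Cohen--Macaulay ring, completion commutes with $\lcm{i}$, one has $\mathfrak{a}(\widehat R) = \mathfrak{a}(R)\widehat R$, and for each $P \in \Spec R$ one can produce, by choosing a minimal prime of $P \widehat R$ of maximal dimension, an extended prime $\widehat P$ with $\dim \widehat R/\widehat P = \dim R/P$ and $\depth \widehat R_{\widehat P} = \depth R_P$; thus both conditions in the statement pass between $R$ and $\widehat R$.

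Assuming $R$ is complete, it admits a normalized dualizing complex $D^\bullet$, and local duality identifies $\lcm{i}(R)$ with the Matlis dual of $\lc^{-i}(D^\bullet)$. Since Matlis duality preserves annihilators of Artinian modules, $\Ann \lcm{i}(R) = \Ann \lc^{-i}(D^\bullet)$, so $P \in V(\mathfrak{a}(R))$ iff $\lc^{-i}(D^\bullet)_P \neq 0$ for some $i < d$. The localization $(D^\bullet)_P$, up to a degree shift, is a normalized dualizing complex for $R_P$; applying local duality again at $R_P$ converts this nonvanishing into $\lc^{j}_{PR_P}(R_P) \neq 0$ for some $j < d - \dim R/P$. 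By Grothendieck's vanishing and nonvanishing theorems this is equivalent to $\depth R_P < d - \dim R/P$, which is exactly the description of the right-hand side.

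For the last assertion, recall that a homomorphic image of a Cohen--Macaulay ring is catenary, so $R$ is equidimensional iff $\dim R_P + \dim R/P = d$ for every prime $P$. Under equidimensionality, $\depth R_P + \dim R/P < d$ rewrites as $\depth R_P < \dim R_P$, matching $\NCM(R)$. If equidimensionality fails, any minimal prime $P$ of smaller-than-maximal dimension lies in $V(\mathfrak{a}(R))$, since $\depth R_P + \dim R/P = \dim R/P < d$, while $R_P$ is Artinian and hence Cohen--Macaulay, producing the strict containment.

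The main obstacle is the bookkeeping of the dualizing-complex shift under localization, i.e., making the isomorphism between $(D^\bullet)_P$ and an appropriately shifted copy of the normalized dualizing complex of $R_P$ precise with a consistent normalization convention; this is exactly what is worked out in \cite[Chapter 9]{BrodmannSharp}.
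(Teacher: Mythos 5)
The paper does not actually prove this lemma: it quotes it as Faltings' annihilator theorem from \cite[9.6.6]{BrodmannSharp}, so there is no internal proof to compare against and your proposal must be judged on its own. Your treatment of the complete case via the normalized dualizing complex is correct (including the shift bookkeeping you worry about, which is standard and unproblematic), and so is the final paragraph relating $\NCM(R)$ to $V(\mathfrak{a}(R))$ under equidimensionality. The genuine gap is in the reduction to $\widehat R$, specifically in the assertion $\mathfrak{a}(\widehat R)=\mathfrak{a}(R)\widehat R$. For a finitely generated module the annihilator commutes with the flat base change $R\to\widehat R$, but $\lc^i_{\m}(R)$ is only Artinian, and for an Artinian module $A$ one has in general only the inclusion $\Ann_R(A)\widehat R\subseteq \Ann_{\widehat R}(A)$; the reverse inclusion can fail even up to radical. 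The one-sided inclusion you do have yields the containment $\{P : \depth R_P+\dim R/P<\dim R\}\subseteq V(\mathfrak{a}(R))$, but the substantive half, $V(\mathfrak{a}(R))\subseteq\{P : \depth R_P+\dim R/P<\dim R\}$, does not follow from your argument.

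A symptom that something must be missing: your proof never genuinely uses the hypothesis that $R$ is a homomorphic image of a Cohen--Macaulay ring, since $\widehat R$ is \emph{always} a homomorphic image of a regular local ring by Cohen's structure theorem. If the reduction were formal, the lemma would hold for every Noetherian local ring, and it does not. Take a catenary but not universally catenary local domain $R$ of dimension $2$ whose completion has a one-dimensional minimal prime $Q$ (Nagata's example). Then $Q\cap R=(0)$ and $Q\in\Att_{\widehat R}(\lc^1_{\widehat{\m}}(\widehat R))$, so $\Ann_R\lc^1_{\m}(R)\subseteq Q\cap R=(0)$, whence $\mathfrak{a}(R)=0$ and $V(\mathfrak{a}(R))=\Spec R$; yet $\depth R_{(0)}+\dim R/(0)=\dim R$, so $(0)$ is not in the right-hand side. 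In this example $\sqrt{\mathfrak{a}(\widehat R)}\neq\sqrt{\mathfrak{a}(R)\widehat R}$, which is exactly the equality your reduction takes for granted. Establishing it is where the hypothesis must enter (via the equidimensionality of $\widehat{R/P}$ for all $P$ and the Cohen--Macaulayness of the formal fibers) and is essentially the content of Faltings' theorem; it cannot be treated as routine bookkeeping. To repair the argument you would either need to prove this compatibility under the stated hypothesis or follow the direct approach of \cite[Chapter 9]{BrodmannSharp}, which works with a Cohen--Macaulay cover of $R$ itself rather than with the completion.
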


The next lemma should also be well-known. But we include a proof as we cannot find a precise reference.

\begin{lemma}\label{CM deforms}
Let $(R, \mf m)$ be a local ring of dimension at least two and $f$ be a filter-regular element.
If $R/fR$ is Cohen--Macaulay, then $R$ is Cohen--Macaulay.
\end{lemma}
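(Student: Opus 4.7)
The plan is to reduce the problem to showing that $f$ is a nonzerodivisor on $R$. Once we know $0:_R f = 0$, the standard depth computation applied to the short exact sequence $0 \to R \xrightarrow{f} R \to R/fR \to 0$ gives $\depth R \geq \depth(R/fR) + 1 = d$, so $R$ is Cohen--Macaulay. Thus the entire task is to prove $K := 0 :_R f = 0$.

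First I would observe that each $K_n := 0 :_R f^n$ has finite length. Since $f$ is filter-regular and $f \in \m$, every associated prime of $K_n$ lies in $\Ass(R) \cap V(f) \subseteq \{\m\}$, so $K_n$ is $\m$-coprimary. Next, since $R/fR$ is Cohen--Macaulay of dimension $d - 1 \geq 1$, we have $H^0_{\m}(R/fR) = 0$. Hence the image in $R/fR$ of any finite length submodule of $R$ must vanish, and in particular $K_n \subseteq fR$ for every $n \geq 1$.

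The key step is then: multiplication by $f$ induces a map $\varphi_n \colon K_{n+1} \to K_n$, $r \mapsto fr$, which is surjective because every $s \in K_n$ can be written as $s = fr$ for some $r \in R$ (using $K_n \subseteq fR$), and then $f^{n+1} r = f^n s = 0$ forces $r \in K_{n+1}$. The kernel of $\varphi_n$ is $K_{n+1} \cap (0:_R f) = K$. Thus $\length(K_{n+1}) = \length(K_n) + \length(K)$. But by Noetherianity the ascending chain $K_1 \subseteq K_2 \subseteq \cdots$ of submodules of $R$ stabilizes, so $\length(K_n)$ is eventually constant, which forces $\length(K) = 0$.

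The main ``obstacle'' is really a single key insight rather than a technical difficulty: using the depth $\geq 1$ condition on $R/fR$ to ensure $K_n \subseteq fR$, which is exactly what allows one to climb the tower of colon ideals via preimages under multiplication by $f$ and extract a length contradiction. The hypothesis $\dim R \geq 2$ enters precisely at this point, guaranteeing $\dim R/fR \geq 1$ so that $H^0_{\m}(R/fR) = 0$.
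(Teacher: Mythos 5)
Your proof is correct, but it takes a genuinely different route from the paper's. The paper applies the long exact sequence of local cohomology to $0 \to R/(0:f) \xrightarrow{\cdot f} R \to R/fR \to 0$, uses the finite length of $0:f$ to identify $\lc^i_{\m}(R/(0:f))$ with $\lc^i_{\m}(R)$ for $i>0$, and then kills all of $\lc^i_{\m}(R)$ for $i<\dim R$ at once from the vanishing of $\lc^i_{\m}(R/fR)$. You instead isolate and prove the stronger intermediate statement that $f$ is an honest nonzerodivisor: the hypothesis $\depth(R/fR)\geq 1$ forces every finite length submodule of $R$ (in particular each $0:f^n$, which is $\m$-coprimary by filter-regularity) into $fR$, which makes multiplication by $f$ a surjection $0:f^{n+1}\twoheadrightarrow 0:f^n$ with kernel $0:f$, and the stabilization of the chain $0:f^n$ then forces $\length(0:f)=0$; after that the standard ``depth drops by exactly one modulo a nonzerodivisor'' argument finishes. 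Both arguments hinge on the same use of $\dim R\geq 2$ (namely $\lc^0_{\m}(R/fR)=0$), but your version is more elementary --- it needs no local cohomology beyond depth --- and it yields the extra conclusion, only implicit in the paper's proof, that a filter-regular element with Cohen--Macaulay quotient of positive dimension is automatically a nonzerodivisor. The length bookkeeping ($\length(0:f^{n+1})=\length(0:f^n)+\length(0:f)$ versus eventual stabilization) is airtight, so there is no gap.
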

\begin{proof}
We consider the short exact sequence $$0 \to R/(0:f) \xrightarrow{\cdot f} R \to R/fR \to 0.$$
Since $f$ is filter-regular, $0:f$ has finite length. It follows that
$\lc^i_{\m}  (R/(0:f)) = \lc^i_{\m} (R)$ for $i > 0$.
Since $\dim R\geq 2$, $\dim (R/fR)\geq 1$. Thus from the long exact sequence of local cohomology we know that
$$\lc^0_{\m}(R/(0:f))\cong \lc^0_{\m}(R), \text{ and } \lc^i_{\m} (R) \overset{\cdot f}\hookrightarrow  \lc^i_{\m} (R) \text{ for $1\leq i<\dim R$}.$$
Therefore $\lc^i_{\m}(R)=0$ for all $i<\dim R$ and so $R$ is Cohen--Macaulay.
\end{proof}

We next prove a crucial proposition.

\begin{proposition}
\label{NCM locus one element case}
Let $(R, \mf m)$ be a local ring of dimension at least two that is a homomorphic image of a Cohen--Macaulay ring
and $f$ be a filter-regular element.
Then $\sqrt{\mathfrak{a}(R/fR)} = \sqrt{\mathfrak{a}(R) + (f)}$.
\end{proposition}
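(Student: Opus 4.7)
The plan is to apply Faltings' annihilator theorem (Lemma~\ref{NCM locus}) to both $R$ and $R/fR$, noting that $R/fR$ inherits from $R$ the property of being a homomorphic image of a Cohen--Macaulay ring. After taking $V(-)$ on both sides of the desired radical equality, it suffices to show that for every prime $P\supseteq (f)$
\[
\depth(R/fR)_P + \dim(R/fR)/P < \dim R/fR \iff \depth R_P + \dim R/P < \dim R.
\]
Because $\dim R\geq 2$ and $f$ is filter-regular, $f$ avoids every minimal prime of $R$ (each such prime lies in $\Ass R$ and is strictly below $\m$), so $\dim R/fR = \dim R - 1$, and $\dim(R/fR)/P = \dim R/P$ since $f\in P$.

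For any $P\neq \m$ containing $(f)$, I would first promote $f$ to a nonzerodivisor on $R_P$: every associated prime of $R_P$ is of the form $QR_P$ with $Q\in\Ass R$ and $Q\subseteq P\neq\m$, so filter-regularity gives $f\notin Q$. Therefore $\depth(R/fR)_P = \depth R_P - 1$, and after subtracting $1$ from both sides the two displayed inequalities coincide identically.

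It remains to treat $P=\m$, where the two inequalities become ``$R/fR$ is not Cohen--Macaulay'' and ``$R$ is not Cohen--Macaulay'', respectively. One direction is precisely Lemma~\ref{CM deforms}. For the converse, if $R$ is Cohen--Macaulay of dimension at least $2$, then $\m\notin \Ass R$; since $\Ass(0:_R f)\subseteq \Ass R$ and filter-regularity forces $\Ass(0:_R f)\subseteq\{\m\}$, we conclude $0:_R f=0$. Thus $f$ is a regular element on the Cohen--Macaulay ring $R$ and $R/fR$ is Cohen--Macaulay.

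No substantial obstacle is anticipated: the $P\neq\m$ case is a matter of dimension bookkeeping once filter-regularity is converted into honest regularity on $R_P$, and the closed point case reduces at once to Lemma~\ref{CM deforms} together with the standard observation that a filter-regular element on a Cohen--Macaulay local ring of positive depth must be a regular element.
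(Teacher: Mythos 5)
Your proof is correct, but it organizes the argument differently from the paper. The paper splits the radical equality into two inclusions and proves them by different means: the inclusion $\sqrt{\mathfrak{a}(R)+(f)}\subseteq\sqrt{\mathfrak{a}(R/fR)}$ is obtained from the long exact sequence of local cohomology attached to $0\to R/(0:f)\to R\to R/fR\to 0$, which yields the explicit ideal containment $\mathfrak{a}(R)^2\subseteq\mathfrak{a}(R/fR)$; only the reverse inclusion is proved via Faltings' theorem (Lemma~\ref{NCM locus}), localization, and Lemma~\ref{CM deforms}. You instead prove the set-theoretic equality $V(\mathfrak{a}(R/fR))=V(\mathfrak{a}(R)+(f))$ in one pass, checking the biconditional prime by prime: away from $\m$ you upgrade filter-regularity to honest regularity on $R_P$ so that depth and dimension each drop by exactly one, and at $\m$ you combine Lemma~\ref{CM deforms} with the observation that a filter-regular element on a Cohen--Macaulay ring of positive depth is a nonzerodivisor. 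All the individual steps check out ($\dim R/fR=\dim R-1$ because $f$ avoids every minimal prime, $\Ass R_P=\{QR_P: Q\in\Ass R,\ Q\subseteq P\}$ forces $f$ regular on $R_P$ for $P\neq\m$, and $0:_Rf=0$ when $\depth R>0$). The trade-off is this: your argument is more uniform and avoids the local cohomology computation entirely, but it only produces the equality up to radical, whereas the paper's long-exact-sequence step gives the sharper containment $\mathfrak{a}(R)^2\subseteq\mathfrak{a}(R/fR)$ on the nose --- a form of the statement better suited to the effectivity questions (Question~\ref{explicit question}) the authors care about. Both arguments rely on the same two pillars for the hard inclusion, namely Faltings' annihilator theorem and Lemma~\ref{CM deforms}, so the closed-point analysis is essentially shared.
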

\begin{proof}
We first prove $\mathfrak{a}(R) \subseteq \sqrt{\mathfrak{a}(R/fR)}$. We consider the short exact sequence $$0 \to R/(0:f) \to R \to R/fR \to 0.$$
Since $0:f$ is $\mf m$-primary, it follows that
$\lc^i_{\m}  (R/(0:f)) = \lc^i_{\m} (R)$ for $i > 0$.
Thus the induced long exact sequence on local cohomology becomes
\begin{align*}
\lc^0_{\m} (R) \to \lc^0_{\m} (R/fR) \to \lc^1_{\mf m} (R)
\to \cdots  \to \lc^i_{\m} (R) \to \lc^i_{\m} (R/fR) \to \lc^{i + 1}_{\m} (R) \to \cdots.
\end{align*}
It follows from the above sequence that $\mathfrak{a}(R)^2 \subseteq  \mathfrak{a}(R/fR)$. Since $(f) \subseteq \sqrt{\mathfrak{a}(R/fR)}$, it follows that $\sqrt{\mathfrak{a}(R) + (f)}\subseteq \sqrt{\mathfrak{a}(R/fR)}$.\footnote{It should be noted that this inclusion is true when $f$ is a parameter element \cite[Remark 2.2, Lemma 3.7]{CuongQuy}}

Next we show that ${\mathfrak{a}(R/fR)} \subseteq \sqrt{\mathfrak{a}(R) + (f)}$. Let $P$ be a prime ideal such that $\mathfrak{a}(R) + (f) \subseteq P$.
If $\mathfrak{a}(R/fR) \not\subseteq P$, then by Lemma \ref{NCM locus} applied to $R/fR$, $R_P/fR_P$ is Cohen--Macaulay and $\depth  (R/fR)_P + \dim R/P = \dim R - 1$.  If $P$ has height one, then $P\neq \m$ since $\dim R\geq 2$. Thus $f$ is a nonzerodivisor on $R_P$ so $R_P$ is Cohen--Macaulay. If $P$ has height at least two, then by Lemma~\ref{CM deforms} we still have $R_P$ is Cohen--Macaulay. Hence in both cases we know that
$\depth R_P + \dim R/P = \dim R$. Therefore by Lemma~\ref{NCM locus} applied to $R$, $\mathfrak{a}(R) \nsubseteq P$, which is a contradiction.
\end{proof}

We next recall the notion of strictly filter-regular sequence \cite{NML}.

\begin{definition}
Let $(R, \m)$ be a local ring of dimension $d$. The local cohomology modules are Artinian and, therefore, have
finite secondary decompositions (see \cite[7.2]{BrodmannSharp} for details) and a finite set of {\it attached primes} $\Att(\lc_{\m}^i(R))$.
An element $f$ is called {\it strictly filter-regular} if $f \notin P$ for $P \in \cup_{i=0}^d\Att(\lc_{\m}^i(R)) \setminus \{\m\}$.
A sequence of elements $f_1, \ldots, f_r$ is called strictly filter-regular if for every $0 \leq i < r$ the image of
$f_{i + 1}$ in $R/(f_1, \ldots, f_{i})$ is a strictly filter-regular element.
\end{definition}

\begin{remark}
\label{remark on strictly filter}
By \cite[11.3.9]{BrodmannSharp}, $\Ass(M)\subseteq \cup_{i=0}^{\dim M}\Att(\lc_{\m}^i(M))$ for any finitely generated $R$-module $M$. Therefore a strictly filter regular sequence is a filter regular sequence. Moreover, if $f$ is strictly filter-regular, then $f$ is not in any minimal prime of $\mathfrak{a}(R)$ except possibly the maximal ideal.
\end{remark}

Finally we prove the main result of this section. This is proved in \cite[Lemma 8]{SrinivasTrivedi} when $R/(f_1, \ldots, f_r)$ is generalized Cohen--Macaulay, and our result removes this assumption.
\begin{theorem}
Let $(R, \mf m)$ be an equidimensional local ring that is a homomorphic image of a Cohen--Macaulay ring
and $f_1, \ldots, f_r$ be a filter-regular sequence. Then there exists $N>0$ such that for every $\varepsilon_1, \ldots, \varepsilon_r \in \m^N$, we have
\[
\dim \NCM(R/(f_1, \ldots, f_r))
\geq \dim \NCM (R/(f_1 + \varepsilon_1, \ldots, f_r + \varepsilon_r)).
\]
Moreover, the equality holds if $f_1, \ldots, f_r$ is a strictly filter-regular sequence.
\end{theorem}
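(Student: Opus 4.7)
I would proceed by iterating Proposition~\ref{NCM locus one element case}. Choose $N$ via Lemma~\ref{prevervation of filter regular} so that $f_1+\varepsilon_1,\ldots,f_r+\varepsilon_r$ remains a filter-regular sequence for all $\varepsilon_i \in \m^N$. Iterated application of Proposition~\ref{NCM locus one element case}, treating separately any intermediate step where the dimension drops below two, gives
\[
\sqrt{\mathfrak{a}(R/(f_1,\ldots,f_r))} = \sqrt{\mathfrak{a}(R) + (f_1,\ldots,f_r)}
\]
and an analogous identity for the perturbation. Since $R$ is equidimensional, Lemma~\ref{NCM locus} identifies $\NCM(R)$ with $V(\mathfrak{a}(R))$; and for any prime $P \neq \m$ containing $(f_1,\ldots,f_r)$, filter-regularity makes $f_1,\ldots,f_r$ a proper regular sequence on $R_P$, so $R_P/(f_1,\ldots,f_r)R_P$ is Cohen--Macaulay exactly when $R_P$ is. Combining these facts,
\[
\dim \NCM(R/(f_1,\ldots,f_r)) = \max_j \dim\bigl(W_j \cap V(f_1,\ldots,f_r)\bigr),
\]
where the $W_j = V(P_j)$ range over the irreducible components of $\NCM(R)$, one for each minimal prime $P_j$ of $\mathfrak{a}(R)$. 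The same formula applies with each $f_i$ replaced by $f_i+\varepsilon_i$.

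The core step is to compare, for each $j$, the dimensions on the two sides. Working in the local domain $A_j := R/P_j$ with maximal ideal $\mathfrak{m}_j$, the problem becomes
\[
\dim A_j/(\bar f_1+\bar\varepsilon_1,\ldots,\bar f_r+\bar\varepsilon_r) \;\leq\; \dim A_j/(\bar f_1,\ldots,\bar f_r).
\]
The key input is Krull's intersection theorem applied in each $A_j$: if some prime $Q$ with $\dim A_j/Q$ exceeding $d := \dim A_j/(\bar f_1,\ldots,\bar f_r)$ contained the perturbed ideal, then every $\bar f_i$ would lie in $Q + \mathfrak{m}_j^N$, while by a dimension comparison some $\bar f_{i_0}$ must be nonzero modulo $Q$ (else $(\bar f_i)\subseteq Q$ and $\dim A_j/Q\le d$); Krull's theorem in $A_j/Q$ then forbids $\bar f_{i_0} \in \mathfrak{m}_j^N + Q$ once $N$ is large enough. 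Securing a uniform choice of $N$ -- both across the finitely many $P_j$ and across the set of potentially dangerous primes $Q$ within each $A_j$ -- is the main technical obstacle; I expect this to be the hardest part and to require some care beyond a naive Krull-type argument.

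For the equality statement under strict filter-regularity, Remark~\ref{remark on strictly filter} applied inductively shows that in $\bar R := R/\mathfrak{a}(R)$ the images $\bar f_1,\ldots,\bar f_r$ form a regular sequence, avoiding all minimal primes of every intermediate quotient. A parallel Krull-intersection argument preserves this property for $\bar f_i + \bar\varepsilon_i$ once $N$ is sufficiently large; hence the perturbed ideal has the same height in each $A_j$ as the original, the dimensions coincide, and the two maxima agree, giving equality of the non-Cohen--Macaulay loci dimensions.
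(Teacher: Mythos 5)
Your overall skeleton matches the paper's: iterate Proposition~\ref{NCM locus one element case} to reduce everything to comparing $\dim R/(\mathfrak{a}(R)+(f_1,\dots,f_r))$ with $\dim R/(\mathfrak{a}(R)+(f_1+\varepsilon_1,\dots,f_r+\varepsilon_r))$. But the step you yourself flag as ``the main technical obstacle'' is exactly where the proof lives, and your Krull-intersection sketch does not close it. The argument ``some $\bar f_{i_0}\notin Q$, so Krull's theorem in $A_j/Q$ forbids $\bar f_{i_0}\in Q+\mathfrak m_j^N$ for $N$ large'' produces an $N$ depending on the individual prime $Q$, and there are in general infinitely many primes $Q$ with $\dim A_j/Q > d$ that must be excluded simultaneously; no compactness is available to make the choice uniform. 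The uniform statement --- for some $N$ and all $\varepsilon_i\in\m^N$, $\dim R/(I,f_1+\varepsilon_1,\dots,f_r+\varepsilon_r)\le\dim R/(I,f_1,\dots,f_r)$ --- is a genuine theorem (the paper invokes \cite[Lemma~1.11]{HunekeTrivedi} for it, and notes that even there no explicit bound is known), and your proposal leaves it unproved. Until you either cite such a result or supply a uniform argument, the inequality in the theorem is not established.

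Two smaller points. For the equality statement you do not need any perturbation argument at all: strict filter-regularity (via Remark~\ref{remark on strictly filter}) gives $\dim R/(\mathfrak{a}(R),f_1,\dots,f_r)=\max\{0,\dim R/\mathfrak{a}(R)-r\}$, and Krull's height theorem says the perturbed quotient $R/(\mathfrak{a}(R),f_1+\varepsilon_1,\dots,f_r+\varepsilon_r)$ has dimension at least this value automatically, so equality follows from the already-established inequality; your claim that the images form a \emph{regular} sequence on $R/\mathfrak{a}(R)$ is also stronger than what the Remark gives (it only guarantees avoidance of minimal primes, i.e.\ a parameter-type dimension drop). Also, your identification of $\NCM(R/(f_1,\dots,f_r))$ with $\bigcup_j\bigl(W_j\cap V(f_1,\dots,f_r)\bigr)$ away from $\m$ is fine and essentially equivalent to the paper's $V(\mathfrak{a}(R)+(f_1,\dots,f_r))$, but you should still handle the case $r=\dim R$ separately, where both loci are just the closed point.
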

\begin{proof}
By \cite[Lemma~1.11 and Remark~1.12]{HunekeTrivedi}, we can find $N$ such that
for all $\varepsilon_1, \ldots, \varepsilon_r \in \mf m^N$ we have
\begin{enumerate}
  \item $\dim R/(\mathfrak{a}(R), f_1, \ldots, f_r) \geq \dim R/(\mathfrak{a}(R), f_1 + \varepsilon_1, \ldots, f_r + \varepsilon_r).$
  \item $f_1 + \varepsilon_1, \ldots, f_r + \varepsilon_r$ is still a filter-regular sequence on $R$.
\end{enumerate}
First, if $r=\dim R$, then $f_1,\dots, f_r$ and $f_1+\varepsilon_1,\dots, f_r+\varepsilon_r$ are both system of parameters of $R$ and hence
$\dim \NCM(R/(f_1, \ldots, f_r))
=\dim \NCM (R/(f_1 + \varepsilon_1, \ldots, f_r + \varepsilon_r))=0$. If $r<\dim R$, then we can repeatedly apply Lemma~\ref{NCM locus} and Proposition \ref{NCM locus one element case} to see that
$$\NCM(R/(f_1, \ldots, f_r))=V(\mathfrak{a}(R/(f_1,\dots,f_r)))=V(\mathfrak{a}(R)+(f_1,\dots,f_r)),$$
and, similarly,
$$\NCM(R/(f_1+\varepsilon_1, \ldots, f_r+\varepsilon_r))=V(\mathfrak{a}(R/(f_1+\varepsilon_1,\dots,f_r+\varepsilon_r)))=V(\mathfrak{a}(R)+(f_1+\varepsilon_1,\dots,f_r+\varepsilon_r)).$$
Thus by (1) we know that \[
\dim \NCM(R/(f_1, \ldots, f_r))
\geq \dim \NCM (R/(f_1 + \varepsilon_1, \ldots, f_r + \varepsilon_r)).
\]

Finally, if $f_1, \ldots, f_r$ is a strictly filter-regular sequence, then by Proposition \ref{NCM locus one element case} and Remark \ref{remark on strictly filter} we know that
\[
\dim \left(R/(\mathfrak{a}(R), f_1, \ldots, f_r)\right) = \max\{0, \dim (R/\mathfrak{a}(R)) - r \} \leq \dim \left( R/(\mathfrak{a}(R),f_1+\varepsilon_1,\dots,f_r+\varepsilon_r)\right),
\]
so we have equality.
\end{proof}

\bibliographystyle{plain}
\bibliography{refs}

\end{document}